\def\sqr#1#2{{\vcenter{\vbox{\hrule height.#2pt
              \hbox{\vrule width.#2pt height#1pt \kern#1pt \vrule width.#2pt}
          \hrule height.#2pt}}}}
\def\a{\alpha}
\def\b{\Pi}
\def\sqr#1#2{{\vcenter{\vbox{\hrule height.#2pt
              \hbox{\vrule width.#2pt height#1pt \kern#1pt \vrule width.#2pt}
              \hrule height.#2pt}}}}
\def\3n{\negthinspace \negthinspace \negthinspace }
\def\2n{\negthinspace \negthinspace }
\def\1n{\negthinspace }
\def\={\buildrel \triangle \over =}
\def\a{\alpha}
\def\b{\beta}
\def\min{\mathop{\rm min}}
\def\inf{\hbox{\rm inf$\,$}}
\def\|{\Big |}
\def\({\Big (}
\def\){\Big )}
\def\[{\Big[}
\def\]{\Big]}
\def\be{\begin{equation}}
\def\bel{\begin{equation}\label}
\def\ee{\end{equation}}
\def\bt{\begin{theorem}}
\def\bcd{\begin{condition}}
\def\ecd{\end{condition}}
\def\et{\end{theorem}}
\def\bc{\begin{corollary}}
\def\ec{\end{corollary}}
\def\bde{\begin{definition}}
\def\ede{\end{definition}}
\def\bl{\begin{lemma}}
\def\el{\end{lemma}}
\def\bp{\begin{proposition}}
\def\ep{\end{proposition}}
\def\br{\begin{remark}}
\def\er{\end{remark}}
\def\ba{\begin{array}}
\def\ea{\end{array}}
\def\ed{\end{document}}
\def\square#1{\vbox{\hrule\hbox{\vrule height#1%
     \kern#1\vrule}\hrule}}
\def\rectangle#1#2{\vbox{\hrule\hbox{\vrule height#1%
     \kern#2\vrule}\hrule}}
\font\tenbb=msbm10 \font\sevenbb=msbm7 \font\fivebb=msbm5
\newtheorem{lemma}{Lemma}
\newtheorem{remark}{Remark}
\newtheorem{theorem}{Theorem}
\newtheorem{corollary}{Corollary}
\newtheorem{definition}{Definition}
\newtheorem{proposition}{Proposition}
\newtheorem{condition}{Assumption}
\begin{document}

\title{ Risk-Sensitive Mean-Field-Type Control\thanks{ Alain Bensoussan is also with the College of Science and Engineering, Systems Engineering and Engineering Management, City University Hong Kong.}}
\author{Alain Bensoussan\\ 
International Center for Decision and Risk Analysis\\
Jindal School of Management, University of Texas at Dallas\\
Boualem Djehiche\\
Department of Mathematics, KTH Royal Institute of Technology, Stockholm\\
Hamidou Tembine\\
Learning \& Game Theory Laboratory, New York University Abu Dhabi\\
Phillip Yam\\
Department of Statistics, The Chinese University of Hong Kong}

%\date{First version: March 14, 2014. This version: \today}
\maketitle

\begin{abstract}
We study risk-sensitive optimal control of a  stochastic differential equation (SDE) of mean-field type, where the coefficients are allowed to depend on some functional of the law as well as the state and control processes. Moreover the risk-sensitive cost functional is also of mean-field type. We derive optimality equations in infinite dimensions connecting dual functions  associated with Bellman functional to the adjoint process of the Pontryagin maximum principle.  The case of linear-exponentiated quadratic cost and its connection with the risk-neutral solution is discussed.
\end{abstract}

\section{Introduction}
We consider the mean-field-type control problem with a risk-sensitive performance functional. For mean-field-type control, the approach is generally to use the maximum principle, see for instance \cite{one,three,five,seven,eight}. We refer the reader to \cite{survey} for a recent survey on the approach. In \cite{two}, it is shown that one can introduce a system of dual Hamilton-Jacobi-Bellman and Fokker-Planck equations, dHJB-FP, similar to that introduced by Lasry \& Lions \cite{six} to handle risk-sensitive mean-field-type control problem. However, the solution of the dHJB equation is not the value function, but must be interpreted as an adjoint function for a dual control problem. Here, we extend this approach to the risk-sensitive mean-field-type control problem. We then make the connection with the stochastic maximum principle, and study the linear-exponentiated-quadratic case. 

We consider functions $f(x,m,v), g(x,m,v)$ where the arguments are $x \in\mathbb{R}^n$, $m$ is a probability measure on $\mathbb{R}^n$, but we will remain mostly in the regular case (with respect to Lebesgue measure), in which $m$ represents the probability density, assumed to be in $L^2(\mathbb{R}^n)$ and $v$ is a control in $\mathbb{R}^d.$ 
The function $f$ is scalar, and the function $g$ is a vector in $\mathbb{R}^n.$ We also consider  $\sigma: x \in \mathbb{R}^n \mapsto \sigma(x)\in  \mathcal{L}(\mathbb{R}^n;\mathbb{R}^n),$ and $h  : \ (x,m)\in \mathbb{R}^n\times L^2(\mathbb{R}^n) \mapsto h(x,m)\in \mathbb{R}.$ All these functions are smooth. In the case of the differentiability with respect to the measure $m,$ we use the concept of Gateaux differentiability. If  $F : L^2(\mathbb{R}^n) \rightarrow \mathbb{R},$ 
then
$$
\lim_{\theta \rightarrow 0}\ \frac{d}{d\theta} \ F(m+\theta \tilde{m})= \int_{\xi\in \mathbb{R}^n} \frac{\partial F}{\partial m} (m)(\xi) \tilde{m}(\xi)d\xi,
$$
with the functional $\xi\in \mathbb{R}^n   \mapsto \frac{\partial F}{\partial m}(m)(\xi)\in L^2(\mathbb{R}^n).$

Consider a  probability space $(\Omega, \mathcal{X},P)$  and a filtration $\mathbb{F}=(\mathcal{F}_t)_{t\ge 0}$, generated by a Wiener process $w(\cdot)$ in $\mathbb{R}^n$.
The classical mean-field-type control problem is the following: Given a control process $v(\cdot)$ adapted to the filtration $\mathbb{F}$, the corresponding state equation is the McKean-Vlasov equation of the mean-field type:
\begin{equation}\label{one0}
dx(t)=g(x,m_v,v)dt+\sigma(x(t))dw(t),\ x(0)=x_0,
\end{equation}
in which $m_v(x,t)$ is the probability density of the random variable (state) $x(t).$ The initial value $x_0$ is a random variable that is 
independent of the Wiener process $w(\cdot)$.  This density is well-defined if the matrix $a(x)=\sigma(x)\sigma^*(x)$ is invertible. 
We define the second order differential operator
$$A\phi(x)=-\frac{1}{2}\sum_{i,j}a_{ij}(x)\frac{\partial^2\phi(x)}{\partial x_i\partial x_j},$$ and its adjoint
$$A^*\phi(x)=-\frac{1}{2}\sum_{i,j}\frac{\partial^2[a_{ij}(x)\phi(x)]}{\partial x_i\partial x_j}.$$

Next define the cumulative  expected cost functional $J(v(.))$ as
\begin{equation} \label{one}
=\mathbb{E}\left[  \int_0^T f(x,m_v, v)dt + h(x(T),m_v(T))\right].
\end{equation}
The risk-neutral mean-field-type control problem is to minimize  $J(v(\cdot)).$

In this paper we consider a risk-sensitive cost functional, which means that we replace (\ref{one}) by
\begin{equation} \label{two}
J^{\alpha}(v(.))=\mathbb{E}e^{\alpha\left[  \int_0^T f(x(t),m_v(t), v(t))dt + h(x(T),m_v(T))\right]}
\end{equation}
in which $\alpha$ is a real number, representing the risk-sensitivity index of the decision-maker. When $\alpha > 0,$
 it models a risk-averse decision, when $\alpha<0$ a risk-seeker individual.

Note that 
$$
\frac{J^{\alpha}(v(.))-1}{\alpha} \rightarrow J(v(.)),\ \mbox{ as }\ \alpha \rightarrow 0.
$$
Equivalently, 
$$
  \frac{1}{\alpha}\log_{10}(J^{\alpha}) \rightarrow J(v(.)),\ \mbox{ as }\ \alpha \rightarrow 0.
$$
So the case (\ref{one}) is considered as representing the risk-neutral situation corresponding to $\alpha=0$.

From now on, we shall assume that $\alpha>0.$ The case $\alpha<0$ is examined using the  same methodology by change $f \rightarrow -f, $ and $h \rightarrow -h.$

\section{Risk-Neutral Case}
We define the risk-neutral Hamiltonian $H:\,\,(x,m,q)\in \mathbb{R}^n \times L^2(\mathbb{R}^n)\times \mathbb{R}^n 
\mapsto H(x,m,q)\in \mathbb{R}  $  as
$$
H(x,m,q)=\inf_v\{ f(x,m,v)+q g(x,m,v)\}
$$
and the optimal value of $v$ is denoted by $v^*(x,m,q).$ We then set $G(x,m,q)=g(x,m,v^*(x,m,q)).$

The mean-field-type control problem is easily transformed into a stochastic control problem for a higher dimensional state, which is the 
probability density $m_v(\cdot)$. It is the solution of the Fokker-Planck equation 
\begin{equation}\label{fpk}
\frac{\partial m_v}{\partial t}+A^*m_v+div(g(x,m_v,v(\cdot)) m_v)=0,\ 
\end{equation} and  $m_v(0,x)=m_0(x),$
in which $m_0(x)$  is the probability density of the initial value $x_0.$ The objective functional $J(v(.))$ can be written as
\begin{eqnarray} \label{onebis}
J(v(.))=  \int_0^T \int_{x\in \mathbb{R}^n}f(x,m_v(t), v(t)) m_v(x,t)dxdt  \nonumber \\  + \int h(x,m_v(T)) m_v(x,T)dx.
\end{eqnarray}

The adjoint system of optimality associated with   (\ref{fpk}) and  (\ref{onebis}) is given by

\begin{eqnarray}\label{bell1}
u(x,T)=h(x,m)+\int_{\mathbb{R}^n} \frac{\partial }{\partial m}h(\xi,m)(x)m(\xi,T)d\xi \\ 
-\frac{\partial u}{\partial t}+Au=H(x,m,D_xu(x))\nonumber \\ \label{bell2} +\int_{\mathbb{R}^n} \frac{\partial }{\partial m}H(\xi,m,D_xu(\xi))(x)m(\xi,t)d\xi  \\ \label{bell21}
\frac{\partial m}{\partial t}+A^*m+div(G(x,m,D_xu) m)=0,\\ m(0,x)=m_0(x).
\end{eqnarray}
The optimal feedback control is $v^*(x,t)=v^*(x,m,D_xu)$. When the functions $h,f,g$ are mean-field free, i.e., do not depend on $m$ then the equations (\ref{bell1}) and  (\ref{bell2}) reduces to standard  Hamilton-Jacobi-Bellman equation in $x$  and a  Fokker-Planck-Kolmogorov equation:

\begin{eqnarray}\label{bell3}
u(x,T)=h(x), \\ \label{bell4}
-\frac{\partial u}{\partial t}+Au=H(x,D_xu(x)),  \\
\frac{\partial m}{\partial t}+A^*m+div(G(x,D_xu) m)=0,\\ m(0,x)=m_0(x).
\end{eqnarray}
In this case, $u$ can be interpreted as the value function, the optimal feedback $v^*(x, t) = v^*(x, D_xu(x, t))$ is time consistent, which means that it does not depend on the initial condition of the dynamic system (\ref{one0}), whereas when the system is coupled, it does.

\section{Risk-Sensitive Case}
\subsection{Mean-Field Free Case}
Let us consider the problem of optimizing
\begin{equation} \label{rsmf0}
J^{\alpha}(v(\cdot))=\mathbb{E}e^{\alpha\left[  \int_0^T f(x(t), v(t))dt + h(x(T))\right]},
\end{equation}
subject to the state dynamics \begin{equation}\label{statemf0}
dx(t)=g(x(t),v(t))+\sigma(x(t))dw(t), \quad  x(0)=x_0,
\end{equation}
To be able to apply the optimality principle we have to introduce a second state equation, namely
 \begin{equation}\label{statemf0}
dz=f(x(t),v(t))dt,\quad z(0)=0,
\end{equation}
and we then get 
$$ 
J^{\alpha}(v(\cdot))=\mathbb{E}e^{\alpha\left[z(T)+ h(x(T))\right]}.
$$
In this way the functional involves only the final state, but the state is now augmented. The new state is  the pair $(x(t),z(t))$.  However, we are in the standard situation, in which we can apply Dynamic Programming. Introduce the family of problems
\begin{eqnarray}\label{bell5}
dx=gds+\sigma(x)dw,  \ \ x(t)=x\\ \label{bell6}
dz=f(x(s),v(s))ds,  \ \ z(t)=z.
\end{eqnarray}
We denote the solution by $x_{x,z,t}(s),z_{x,z,t}(s),\ $ and set $J^{\alpha}_{x,z,t}(v)=\mathbb{E} e^{\alpha[z_{x,z,t}(T)+h(x_{x,z,t}(T))]}.$ We define
$\Phi^{\alpha}({x,z,t})=\inf_{v(.)}\ J^{\alpha}_{x,z,t}(v).$ We can then write the Bellman equation:

\begin{eqnarray}\label{bell7}
-\frac{\partial \Phi^{\alpha}}{\partial t}+A\Phi^{\alpha}=\inf_{v} D_x\Phi^{\alpha}g(x,v)+ \frac{\partial \Phi^{\alpha}}{\partial z}(x,v), \\ \label{bell8}
\Phi^{\alpha}(x,z,T)=  e^{\alpha[z+h(x)]}.
\end{eqnarray}
The above system can be  solved by separation of variables as follows: 
$\Phi^{\alpha}(x,z,t)=e^{\alpha z}u^{\alpha}(x,t)$ with $u^{\alpha}(x,t)$ solution of 

\begin{eqnarray}\label{bell10}
-\frac{\partial u^{\alpha}}{\partial t}+Au^{\alpha}=\inf_{v} D_x\Phi^{\alpha}. g(x,v)+ \alpha u^{\alpha} f(x,v), \\ \label{bell11}
u^{\alpha}(x,T)=  e^{\alpha h(x)}.
\end{eqnarray}

We see easily that $\frac{u^{\alpha}-1}{\alpha} \rightarrow u$ as $\alpha \rightarrow 0,$ where the function $u$ is the solution of the risk-neutral system  (\ref{bell3})-(\ref{bell4}). The optimal control is obtained by a feedback depending on the state $x$, but not on the state $z.$

\subsection{Mean-Field Dependence}
We now turn to the risk-sensitive mean-field-type control problem with (\ref{one0}) and  (\ref{two}). We introduce again a new state $z(t)$ with the mean-field term:
\begin{eqnarray}\label{bell12}
dx=g(x,m_v,v) dt+\sigma(x)dw, x(0)=x_0,\\ \label{bell13}
dz=f(x,m_v,v)dt, \ \ z(0)=0.\end{eqnarray}
We have to consider a feedback $v(x, z)$ depending on the full state $(x, z)$. The simplification which occurred
in the case without mean-field, namely the optimal feedback was depending on the state $x$ only, does not extend in the current context. However, we still consider that the probability $m_v(t)$ entering in the functions $f$ and $g$ is the probability density of $x(t)$ and not the joint probability distribution $\mu_v(x,z,t)$ of the pair $(x(t),z(t))$. Therefore,
$$
m_v(x,t)=\int_{\mathbb{R}}\mu_v(x,z,t)dz.
$$
The joint probability distribution $\mu_v(x,z,t)$ of the pair $(x(t),z(t))$ solves the degenerate Fokker-Planck-Kolmogorov equation

\begin{equation}\begin{array}{lll}\label{fpk3}
\frac{\partial \mu_v}{\partial t}+A^*\mu_v+div(g(x,m_v,v(x,z,t)) \mu_v)\\ +\frac{\partial}{\partial z}[ f(x,m_v,v(x,z)) \mu_v]=0,\\ \\ \mu_v(0,x,z)=m_0(x)\otimes\delta_{0}(z)
\end{array}
\end{equation}
and we can write the cost functional as

$$J^{\alpha}(v)=\int_{x\in \mathbb{R}^n}\int_{z\in \mathbb{R}}\ \mu(x,z,T)\ e^{\alpha[z+h(x,\int_{\mathbb{R}}\mu_v(x,z,T)dz]}dz dx
$$

We can apply the general theory by adapting the system  (\ref{bell1})  (\ref{bell2}) and (\ref{bell21}). We introduce the Hamiltonian of the augmented state as
$$
\tilde{H}(x,m,q,\rho)=\inf_{v}\{ q g+\rho f  \},
$$
reserving the terminology $H(x,m,q)=\tilde{H}(x,m,q,1).$ The optimal feedback control is
 denoted by $v^*(x, m, q, \rho)$ and we set
 $
 \tilde{F}(x,m,q,\rho)=f(x,m,v^*(x, m, q, \rho)),$ $   \tilde{G}(x,m,q,\rho)=g(x,m,v^*(x, m, q, \rho)).
 $

We also set
$
v^*(x, m, q)=v^*(x, m, q, 1),$  $$F(x,m,q)= \tilde{F}(x,m,q,1),\ G(x,m,q)= \tilde{G}(x,m,q,1).
$$

The risk-sensitive adjoint system is

\begin{eqnarray}
u(x,z,T)=e^{\alpha[z+h(x,m(T))]}\nonumber \\ \label{adjoint1} +\alpha \int_{\mathbb{R}^n}\int_{\mathbb{R}}   e^{\alpha[\zeta+h(\xi,m(T))]}
\frac{\partial }{\partial m}h(\xi,m)(x)\mu(\xi,\zeta,T)d\xi d\zeta \\  \nonumber
-\frac{\partial u}{\partial t}+Au=\tilde{H}(x,m,D_xu,\frac{\partial u}{\partial z})+\\ \label{adjoint2}
\int_{\mathbb{R}^n} \frac{\partial }{\partial m}\tilde{H}(\xi,m,D_xu(\xi,\zeta),\frac{\partial u}{\partial z}(\xi,\zeta))(x)\mu(\xi,\zeta,t)d\xi\ d\zeta  \\ 
\frac{\partial \mu}{\partial t}+A^*\mu+div(g(x,m,v(x,z,t)) \mu)+\nonumber \\ \label{adjoint3} \frac{\partial}{\partial z}[ f(x,m_v,v(x,z)) \mu]=0,\ \\  \label{adjoint4}
\mu(0,x,z)=m_0(x)\otimes\delta_{0}(z),\\  \label{adjoint5}
m(x,t)=\int_{\mathbb{R}}\mu_v(x,z,t)dz.
\end{eqnarray}

\subsection{Transformation of the equation}
We aim to transform the system  (\ref{adjoint1}),  (\ref{adjoint2}),  (\ref{adjoint3}). We introduce $\chi(x,z,t)$ defined by
$\chi(x,z,t)=\frac{\partial u}{\partial z}(x,z,t).$

We differentiate the equation in $u$, in (\ref{adjoint2}), with respect to $z$. Taking account of the fact that the integrals depend only on $x,$ we get the relation

\begin{eqnarray}\label{chione}
\chi(x,z,T)=\alpha e^{\alpha[z+h(x,m(T))]},\\ 
-\frac{\partial \chi}{\partial t}+A\chi \nonumber \\  \label{chiotwo} = \frac{\partial \chi}{\partial z} \tilde{F}(x,m,D_xu,\frac{\partial u}{\partial z})+D_x\chi. \tilde{G}(x,m,D_xu,\frac{\partial u}{\partial z}).
\end{eqnarray}
We have the following result
\begin{lemma}\label{chi-positive}
The function $(x,z,t) \mapsto \chi(x,z,t)$ is positive: $\chi>0.$
\end{lemma}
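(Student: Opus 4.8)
The plan is to exploit the fact that $\chi = \partial u/\partial z$ satisfies its own linear parabolic equation, namely (\ref{chione})–(\ref{chiotwo}), which is a backward linear transport–diffusion equation with no zeroth-order term. Since the equation for $\chi$ contains only first-order transport terms $\frac{\partial \chi}{\partial z}\tilde F(\cdot) + D_x\chi\cdot\tilde G(\cdot)$ on the right and the diffusion operator $A$ on the left, and no term of the form ``(coefficient)$\cdot\chi$'', a maximum/minimum principle applies: the sign of $\chi$ is propagated from the terminal data. The terminal condition (\ref{chione}) is $\chi(x,z,T) = \alpha e^{\alpha[z + h(x,m(T))]}$, which is strictly positive because $\alpha > 0$ (recall the standing assumption after (\ref{two})) and the exponential is always positive. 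Hence I expect $\chi > 0$ on all of $\mathbb{R}^n\times\mathbb{R}\times[0,T]$.

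To make this precise I would first fix the coefficients: along the optimal (or any admissible) trajectory, $D_xu(x,z,t)$ and $\frac{\partial u}{\partial z}(x,z,t) = \chi(x,z,t)$ are regarded as known smooth functions, so $b(x,z,t) := \bigl(\tilde G(x,m,D_xu,\chi),\, \tilde F(x,m,D_xu,\chi)\bigr)$ is a given (smooth) drift on $\mathbb{R}^{n+1}$, and (\ref{chiotwo}) reads $-\partial_t\chi + A\chi = b\cdot D_{(x,z)}\chi$, i.e.\ $\partial_t\chi = A\chi - b\cdot D_{(x,z)}\chi$ run backward from $t=T$. This is a standard linear Kolmogorov-type equation; its solution admits the Feynman–Kac representation $\chi(x,z,t) = \mathbb{E}\bigl[\alpha e^{\alpha[Z_T + h(X_T,m(T))]}\bigr]$, where $(X_s,Z_s)_{s\ge t}$ is the diffusion started at $(x,z)$ at time $t$ with generator $-A + b\cdot D$ (note $A$ carries the minus sign in the paper's convention, so $-A = \tfrac12\sum a_{ij}\partial^2_{ij}$ is genuinely elliptic). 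Since the integrand $\alpha e^{\alpha[\cdots]}$ is pointwise strictly positive and $\alpha>0$, the expectation is strictly positive, giving $\chi>0$.

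The cleanest alternative, avoiding any probabilistic representation, is the classical strong maximum principle for linear parabolic PDEs: write $\psi = -\chi$, which satisfies the same homogeneous linear equation with terminal data $\psi(\cdot,T) = -\alpha e^{\alpha[\cdots]} < 0$; if $\psi$ attained a nonnegative value somewhere in $\mathbb{R}^n\times\mathbb{R}\times[0,T)$ then, arguing on a suitable bounded region (or invoking a version of the maximum principle valid on unbounded domains under growth control on $\chi$), the maximum would be attained on the parabolic boundary $t=T$, contradicting negativity of the terminal data — hence $\psi<0$, i.e.\ $\chi>0$. One then upgrades ``$\chi\le 0$ impossible'' to the strict inequality via the strong maximum principle (or simply re-reads the Feynman–Kac formula).

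The main obstacle is purely technical rather than conceptual: the spatial domain $\mathbb{R}^n\times\mathbb{R}$ is unbounded and the $z$-variable is only transported (the operator is degenerate in $z$), so one must either justify the Feynman–Kac representation under appropriate growth/regularity hypotheses on $\tilde F$, $\tilde G$, $h$ and on $\chi$ itself — which is reasonable since all data are assumed smooth and the problem is posed in $L^2$ — or invoke a maximum-principle statement tailored to unbounded domains (e.g.\ a Phragmén–Lindelöf-type condition ruling out pathological growth at infinity). Given the paper's standing smoothness assumptions and the explicit exponential terminal data, I would present the Feynman–Kac argument as the main line, since it delivers strict positivity directly and transparently, and remark that the degeneracy in $z$ causes no difficulty because $Z_T = z + \int_t^T \tilde F\,ds$ is still a well-defined random variable.
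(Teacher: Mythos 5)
Your argument is essentially the paper's own: the authors also note (only formally, since assumptions are left unstated) that (\ref{chiotwo}) is a linear parabolic equation with no zeroth-order/source term and strictly positive terminal data, so the maximum principle propagates positivity. Your Feynman--Kac representation and the remarks on the unbounded domain and the degeneracy in $z$ simply make explicit the technical points the paper deliberately leaves formal.
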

\begin{proof}
We proceed only formally, since the assumptions have not been stated. The equation (\ref{chiotwo}) is a linear parabolic equation in $\chi$, with no right-hand side and strictly positive final condition. This implies the result.
\end{proof}
This allows to assert that
$$
 \tilde{H}(x,m,D_xu,\frac{\partial u}{\partial z})=\chi  {H}(x,m,\frac{D_xu}{\chi} ),
 $$

$$\tilde{F}(x,m,D_xu,\frac{\partial u}{\partial z})=F(x,m,\frac{D_xu}{\chi}),$$

$$
\tilde{G}(x,m,D_xu,\frac{\partial u}{\partial z})=G(x,m,\frac{D_xu}{\chi}).$$

So we can write the system

\begin{eqnarray}
u(x,z,T)=e^{\alpha[z+h(x,m(T))]}+\nonumber \\ \label{adjoint11} \alpha \int_{\mathbb{R}^{n+1}}   e^{\alpha[\zeta+h(\xi,m(T))]}
\frac{\partial }{\partial m}h(\xi,m)(x)\mu(\xi,\zeta,T)d\xi d\zeta \\ 
-\frac{\partial u}{\partial t}+Au=\chi {H}(x,m,\frac{D_xu}{\chi})+\nonumber \\ \label{adjoint21}
\int_{\mathbb{R}^n} \frac{\partial }{\partial m}\chi(\xi,\zeta) {H}(\xi,m,\frac{D_xu(\xi,\zeta)}{\chi(\xi,\zeta)})(x)\mu(\xi,\zeta,t)d\xi\ d\zeta  \\ 
\frac{\partial \mu}{\partial t}+A^*\mu+div(G(x,m,\frac{D_xu}{\chi}) \mu)+\nonumber \\ \label{adjoint31} \frac{\partial}{\partial z}[ F(x,m,\frac{D_x u}{\chi}) \mu]=0,\ \\  \label{adjoint41}
\mu(0,x,z)=m_0(x)\otimes\delta_{0}(z),\\  \label{adjoint51}
m(x,t)=\int_{\mathbb{R}}\mu_v(x,z,t)dz,\\
\chi(x,z,T)=\alpha e^{\alpha[z+h(x,m(T))]},\\ 
-\frac{\partial \chi}{\partial t}+A\chi= \nonumber\\  \label{adjoint61} \frac{\partial \chi}{\partial z} {F}(x,m,\frac{D_xu}{\chi})+D_x\chi. {G}(x,m,\frac{D_xu}{\chi} ).
\end{eqnarray}
\begin{remark}
Note formally that $u^{\alpha},\chi^{\alpha}, \mu^{\alpha}, m^{\alpha}$ 
 the solution of the system (\ref{adjoint11}),(\ref{adjoint21}), (\ref{adjoint31}), (\ref{adjoint41}), (\ref{adjoint51}), (\ref{adjoint61}) satisfy
$$
\frac{\chi^{\alpha}}{\alpha} \rightarrow 1,\ \frac{u^{\alpha}-1-\alpha z}{\alpha}  \rightarrow u
$$
where the pair $(u, m)$ is the solution of the risk-neutral system  (\ref{bell1}),  (\ref{bell2}),  (\ref{bell21}).

\end{remark}
\section{Stochastic Maximum Principle}
We can derive from the system (\ref{adjoint11})- (\ref{adjoint61}) a stochastic maximum principle. We use the following notation: $X(t), Z(t)$ represent the optimal states and  $V(t)$
represent the optimal control. The probability distribution of $X(t)$ is denoted by $\mathbb{P}_{X(t)}.$ We shall define
 the adjoint processes by $Y(t)=\frac{D_xu(X(t),Z(t),t)}{\chi},\ \eta(t)=\chi(X(t),Z(t),t).$

In fact the real adjoint process is $Y.$ Following the standard notation of stochastic maximum principle, the Hamiltonian is written as
$$
H(X(t),\mathbb{P}_{X(t)},v,Y(t) )=f(X(t),\mathbb{P}_{X(t)},v)+Y(t). g(X(t),\mathbb{P}_{X(t)},v)
$$
and by definition of $F,G,$
$$
F(X(t),\mathbb{P}_{X(t)},v,Y(t) )=f(X(t),\mathbb{P}_{X(t)},V(t) )
$$

$$
G(X(t),\mathbb{P}_{X(t)},v,Y(t) )=g(X(t),\mathbb{P}_{X(t)},V(t) )
$$

In order to state the stochastic maximum principle we compute the It\^o differential of $Y(t).$ We apply Ito's formula to the function $\frac{D_xu(X(t),Z(t),t)}{\chi}.$ 
After tedious calculations, we obtain
$$
\begin{array}{ll}
dY=[-D^2_xu.a.D_x \log \chi+  \frac{D_xu(X(t),Z(t),t)}{\chi} \| \sigma^* D_x \log \chi \|^2\\ -\frac{1}{\chi}\mbox{tr}(D^2_xu. \sigma.D_x\sigma^*)(X(t),Z(t) )]dt\\
-D_xH(X(t),\mathbb{P}_{X(t)},V(t), Y(t))dt\\
-\frac{1}{\chi(X(t), Z(t))}\int_{\mathbb{R}^{n+1}}\chi(\xi,\zeta )D_x\frac{\partial  H}{\partial m}(\xi,m, \frac{D_xu(\xi,\zeta)}{\chi})(X(t))
\\ \quad \ \mu(\xi,\zeta,t) d\xi d\zeta\ dt\\
+
[\frac{D^2_x u}{\chi}-\frac{D_xu}{\chi}. (D_x\log \chi)^*]\sigma(X(t)) dw(t)
\end{array}$$

Set $\eta(t)=\chi(X(t),Z(t),t)=:\chi(t).$ Using It\^o's formula,
$$
d\eta=\eta l dw(t),\  \ \eta(T)=\alpha e^{\alpha[Z(T)+h(X(T),m(T))]},
$$
in which we have set $l(t)=\sigma^* D_x \log \chi(X(t), Z(t), t).$
Let also define $  \Gamma(t)=D^2_xu. \sigma(X(t), Z(t), t).$

$$
\begin{array}{ll}
dX(t)=g(X(t),\mathbb{P}_{X(t)},V(t))dt+\sigma(X(t))dw(t),\\
X(0)=x_0,\\
dZ(t)=f(X(t),\mathbb{P}_{X(t)},V(t))dt,\\
dY=-[\frac{\Gamma(t)l(t)}{\chi(t)}-Y(t) \| l(t)\|^2+\frac{1}{\chi(t)}\mbox{tr}(\Gamma(t)D_x\sigma^*(X(t)))    ]dt\\
-D_xH(X(t),\mathbb{P}_{X(t)},V(t),Y(t)) dt \\ -\frac{1}{\chi(t)} \mathbb{E}[\chi(t)D_x \frac{\partial  H}{\partial m}(X(t),\mathbb{P}_{X(t)},V(t),Y(t) )(X(t))]dt\\
+(-Y(t)l^*(t)+\frac{\Gamma(t)}{\chi(t)})dw(t),\\
Y(T)=D_xh(X(T),\mathbb{P}_{X(T)})+\\ \frac{\alpha}{\chi(T)}\mathbb{E} \left[e^{\alpha[Z(T)+h(X(T),m(T))]} D_x\frac{\partial  h}{\partial m}(X(T),\mathbb{P}_{X(t)})(X(T) \right],\\
d\chi=\chi l dw(t),\  \chi(T)=\alpha e^{\alpha[Z(T)+h(X(T),m(T))]},\\
V(t)\in \arg\min_v \{H(X(t),\mathbb{P}_{X(t)},v,Y(t))  \}
\end{array}$$
The processes $l(t)$ and $\Gamma(t)$ are defined by the fact that $\chi(t)$ and $Y (t)$ are solutions of stochastic backward
differential equations.

\section{ Linear-quadratic risk-sensitive case}
\subsection{Mean-Field Free Case}
Here we assume that $\beta=0$ and
$f(x, v) = x^*Qx + v^*Rv, \ g(x, v) = Ax + Bv,\ $
$h(x)= x^*Q_Tx,\  \sigma(x)=\sigma$

We look for a solution as follows
 $u^{\alpha}(x,t)=  e^{\alpha [x^*\Pi(t)x+\rho(t)]},$ where

\begin{eqnarray}
\frac{d}{dt}\Pi(t)+\Pi(t) A+A^*\Pi(t) \nonumber \\  \label{fail}-\Pi(t)[BR^{-1}B^*-\alpha a]\Pi(t)+Q=0,\\
\Pi(T)=Q_T,\\
\rho(t)=\frac{1}{2}\int_t^T \mbox{tr}[a\Pi(s)]\ ds
\end{eqnarray}

Of course, the Riccati equation (\ref{fail}) may fail to have a solution.

\subsection{Mean-Field Dependent Case}
For linear-quadratic setup we want to solve the following problem (we only consider the one-dimensional case):
\begin{eqnarray}
\label{smp2t}
\left\{
\begin{array}{lll} \inf_{v(\cdot)\in \mathcal{U}}  \mathbb{E} e^{\alpha \left[\frac{1}{2}x^2(T)+\b  E[x(T)]+ z(T)\right]},
\\
\displaystyle{\mbox{ subject to }\ }\\
dx(t)=(ax(t)+bv(t))dt+\sigma dB(t),\\
dz(t)=\frac{1}{2}v^2(t) dt,\\
x(0)=x_{0}, \quad z(0)=0. \\
 \end{array}
\right.
\end{eqnarray}
 With 
$
g(x,m,v):=ax+bv,\,\,\, f(x,m,v):=\frac{1}{2}v^2,\,\,\, $ $ h(x,m):=\frac{1}{2}x^2+\b \int ym(dy), \,\,\, \sigma(x):=\sigma.
$ Note that $g,f,\sigma$ are independent of $m.$ 
The corresponding Hamiltonian
\begin{eqnarray}\label{Ham-1}
\tilde H(x,m,D_xu,D_zu)=\inf_{v}\[ (ax+bv)D_xu+\frac{1}{2}v^2D_zu\]\\
=axD_xu-\frac{1}{2}b^2\frac{(D_xu)^2}{D_zu},
\end{eqnarray}
where the optimal control is 
\be\label{opt-u-LQ}
\bar{v}=-b\frac{D_xu}{D_zu},
\ee
noting that, by Lemma \ref{chi-positive} or (\ref{chi-eq}) below, $D_zu>0.$

The optimal state $(x(t),z(t))$ solves
\be\label{smp2t-opt}
\left\{
\begin{array}{lll} 
dx(t)=(ax(t)-b^2\frac{D_xu}{D_zu})dt+\sigma dB(t),\\
dz(t)=\frac{1}{2}\left(b\frac{D_xu}{D_zu}\right)^2 dt,\\
x(0)=x_{0}, \quad z(0)=0. \\
 \end{array}
\right.
\ee
Its associated infinitesimal generator is
\begin{eqnarray}
\mathcal{A}\psi(x,z)=\frac{\sigma^2}{2}D^2_{x}\psi(x,z)+\left(ax-b^2\frac{D_xu}{D_zu}\right)D_x\psi(x,z)\nonumber \\ \label{generator-opt}+\frac{1}{2}\left(b\frac{D_xu}{D_zu}\right)^2D_z\psi(x,z).
\end{eqnarray}

\subsubsection{The adjoint function $u$} 

The adjoint function $u$ is the solution of
\be\label{u-eq}
\partial_t u(t,x,z)+\mathcal{A}u(t,x,z)=0 ,
\ee
%where $\frac{\partial \tilde H}{\partial m}=0$, 
with terminal value 
\be\label{u-eq-T}
u(T, x,z)=e^{\a(z+ h(x,m(T)))}+\a \beta x \int\int e^{\a(\zeta+ h(y,m(T)))} \mu(dy,d\zeta,T).
\ee
In terms of the process  $(x(t),z(t))$ given in  (\ref{smp2t-opt}) we have
\be\label{u-mart}\left\{
\begin{array}{ll}
du(t,x(t),z(t))=\sigma D_xu(t,x(t),z(t))dB(t),\\
u(T, x(T),z(T))=\phi^{\alpha}_T +\a \beta x(T) E[\phi^{\a }_T],\\
\phi^{\alpha}_T:=e^{\a(z(T)+ h(x(T),m(T)))}.
\end{array}
\right.
\ee

\subsubsection{The function $D_zu$} 

Differentiating (\ref{u-eq}) w.r.t. $z$ we obtain the following PDE for $\chi:=D_zu$:
\be\label{chi-eq}
\partial_t \chi+\mathcal{A}\chi=0,\ \ \chi(T)=\a e^{\a(z+ h(x,m(T)))}.
\ee
In terms of the process  $(x(t),z(t))$ given in  (\ref{smp2t-opt}) we have
\be\label{chi-mart}\left\{
\begin{array}{ll}
d\chi(t)=\sigma D_x\chi(t)dB(t),\\
\chi(T)=\alpha \phi^{\a }_T,\ \    \chi(0)=\alpha E[\phi^{\a }_T].
\end{array}
\right.
\ee
\subsubsection{The function $D_xu$}  Differentiating (\ref{u-eq}) w.r.t. $x$ we obtain the following PDE for \\ $\varphi:=D_xu$, (the equality $D_x\chi=D_z\varphi$ is used in the calculation).
\begin{eqnarray}\label{phi-eq}
\partial_t \varphi+\mathcal{A}\varphi=0,\ \\  \varphi(T)=\a x e^{\a(z+ h(x,m(T)))}+\nonumber \\ \a\beta \int\int e^{\a(\zeta+ h(y,m(T)))} \mu(dy,d\zeta,T).
\end{eqnarray}
In terms of the process  $(x(t),z(t))$ given in  (\ref{smp2t-opt}) we have
\be\label{phi-mart}\left\{
\begin{array}{ll}
d\varphi(t)=\sigma D_x\varphi(t)dB(t),\\
\varphi(T)=\alpha x(T) \phi^{\a }_T+\alpha\beta E[\phi^{\a }_T].
\end{array}
\right.
\ee

\subsubsection{Characterization of the optimal control}

Using (\ref{phi-mart}) and (\ref{chi-mart}), by It\^o's formula, the process $p:=\frac{D_xu}{D_zu}=\frac{\varphi}{\chi}$ satisfies
\be\label{pq-opt}\left\{
\begin{array}{ll}
dp(t)=-\{a p(t)+\ell(t) q(t)\}dt+q(t)dB(t),\\
p(T)=\frac{\varphi(T)}{\chi(T)}=x(T)+\beta \frac{\chi(0)}{\chi(T)},
\end{array}
\right.
\ee
where
\be\label{ell}
\ell(t):=\sigma\frac{D_x\chi}{\chi}=\sigma\frac{D_{xz}u}{D_zu}
\ee
and
\be\label{q-phi}
q(t)=\sigma\frac{D_x\varphi(t)}{\chi(t)}-p(t)\ell(t).
\ee

The process $(p,q,l)$ has an explicit solution in terms of $\chi(0)$ and deterministic function $\pi,\omega$ and is given by
$
p(t)= \pi(t) x(t) +\beta \omega(t) \frac{\chi(0)}{\chi(t)}, \ $ $ q(t)= \sigma \pi(t) -\beta l(t) \omega(t) \chi(0) \chi^{-1}(t).
$
where $\pi,\omega$ solve Riccati equations.

$$\left\{
\begin{array}{ll}
\dot{\pi}+2a\pi -b^2\pi^2+\a \beta \sigma^2 \gamma =0,\ \pi(T)=1\\
\dot{\omega}+(a-b^2\pi)\omega=0,\ \omega(T)=1.
\end{array}
\right.
$$

The expected value $y(t)=E[x(t)]$ of the optimal state  solves the ODE
$$
\dot{y}= a y-b^2 E[p]=(a-b^2\pi)y- \beta  b^2 \omega  \chi(0)E[\chi^{-1}(t)].
$$

The optimal value of the problem is $v^{\alpha}:=\frac{D_zu (0,x_0,0)}{\alpha}=\frac{\chi(0)}{\alpha}.$
%\subsection{Mean-Field Free Case}
\subsection{Approximation of the risk-sensitive value}

In this section we assume that $\beta\neq 0$ is small. In the previous section we provided an explicit solution of the value function in terms of $\frac{\chi(0)}{\alpha}=\mathbb{E}\phi^{\alpha}.$
However, $\chi(0)$ needs to be calculated.  When $\beta=0$ this was explicitly computed from $\pi(0),\omega(0)$ (which were denoted $\Pi(0),\rho(0)$).
The optimal value of the adjoint function is $u_0(x,z,t)=e^{\alpha(z+\frac{1}{2}\Pi(t)x^2+\rho(t))}.$

Now, when $\beta\neq 0$ we aim to approximate $u_{\beta}(T):=\mathbb{E}\phi^{\alpha}_T=e^{\alpha \beta y(T)} \mathbb{E}\[e^{\alpha [z(T)+\frac{1}{2}x^2(T)]}\].$
For $\beta$ small enough we test the ansatz $u_{\beta}(x,z,t)=e^{\alpha \beta y(T)} \tilde{u}(x,z,t)$
where $y(T)=y_{\beta}(T)\sim y_0(T)=\bar{x}_0 e^{\int_0^T [a-b^2\Pi(t)]dt },$
$$
\mathbb{E}\[ e^{\alpha [z(T)+\frac{1}{2}x^2(T)]}\]\sim \int_{x} \int_{z}u_0(x,z,0)\mu(x,z,0)dxdz,
$$
which is expressed as
$\int_{x} u_0(x,0,0)m_0(x)dx=\int_x e^{\frac{1}{2}\Pi(0) x^2+\rho(0)}m_0(x)dx.$

The function $\tilde{u}$ solves the partial differential equation
\begin{equation} \label{tilde1}
\frac{\partial \tilde{u}}{\partial t}+\frac{a}{2}D_{xx}\tilde{u}+ax D_x\tilde{u}-\frac{b^2}{2}\frac{|D_x\tilde{u}|^2}{D_z\tilde{u}}=0,
\end{equation}
and $\tilde{u}(x,z,T)=e^{\alpha(z+\frac{1}{2}x^2)}+\alpha \beta \lambda(T),$ where $\lambda(T)=\mathbb{E}_{\mu(.,T)} e^{\alpha(z(T)+\frac{1}{2}x^2(T)}.$
We will check that 
$\tilde{u}(x,z,t)\sim u_0(x,z, t) + \alpha \beta x \omega(t)\lambda(T)=\hat{u}(x,z,t),$
for a convenient function of time  $ \omega(t)$ to be determined. By choosing $ \omega(t)$ as 
$$
 \omega(t)= e^{\int_t^T [a-b^2\Pi(s)] ds},
$$
we  easily check that
$$
\frac{\partial \hat{u}}{\partial t}+\frac{a}{2}D_{xx}\hat{u}+ax D\hat{u}-\frac{b^2}{2}\frac{| D_x \hat{u}|^2}{D_z\hat{u}}=\frac{b^2}{2}\frac{\alpha\beta^2}{\omega^2}{u_0}.
$$
This means that the function $\hat{u}$ is consistent with  (\ref{tilde1}) around $t=T.$ When $\beta^2$ can be neglected, the approximation of $\tilde{u}$ by $\hat{u}$ 
becomes exact. If we accept this approximation, we still need to fix the value of $y(T)$ and $\lambda(T).$ This involves calculations using the probability measure $\mu$ which 
solve the Fokker-Planck-Kolmogorov equation
\begin{equation} \label{tilde2}
\frac{\partial }{\partial t} \mu-\frac{a}{2}D_{xx}\mu+ \frac{\partial}{\partial x}( (ax-b\frac{D_x u}{D_z u})\mu )+\frac{b^2}{2}\partial_z\left[\mu \frac{|D_x u|^2}{|D_z u|^2}\right]=0.
\end{equation}
From the approximation above it follows that the ratios  $\frac{D_xu}{D_zu}$ and $\frac{D_x\hat{u}}{D_z\hat{u}}$ are close to each other: $$\frac{D_xu}{D_zu}=\frac{D_x\tilde{u}}{D_z\tilde{u}}\sim  \frac{D_x\hat{u}}{D_z\hat{u}}= \Pi(t) x+\beta \frac{\lambda(T) 
\omega(t)}{u_0(x,z,t)}.
$$

Neglecting the term in $\beta$ of the last term
$\frac{D_xu}{D_z u}\sim \Pi(t) x$
and hence $\mu$ will be approximated by $\hat{u}$ as
\begin{equation} \label{tilde3}
\frac{\partial }{\partial t}\hat{\mu}-\frac{a}{2}D_{xx}\hat{\mu}+ \frac{\partial}{\partial x}( x(a-b\Pi)\hat{\mu} )+\frac{b^2\Pi^2x^2}{2}D_z\hat{\mu}=0,
\end{equation}
and $\mu(x,z,0)=\hat{\mu}(x,z,0)=m_0(x)\otimes \delta_0(z).$
We immediately deduce that the expected value of $x(t)$ as $y(t)=\int_{x\in \mathbb{R}}\int_{z\in \mathbb{R}}x\mu(x,z,t)dxdz$ is solution of 
the ordinary differential equation
$$
\dot{y}=y (a-b^2\Pi),\ \  y(0)=\bar{x}_0=\int x m_0(x)dx.
$$

Next, writing 
$$
- \frac{\partial u_0}{\partial t}-\frac{a}{2} D_{xx}u_0=x(a-b^2\Pi)D_x u_0+\frac{b^2\Pi^2 x^2}{2} D_z u_0
$$ and testing with $\hat{\mu}$ we obtain 
\begin{eqnarray}
\lambda(T)&=&  \mathbb{E}\[ e^{\alpha [z(T)+\frac{1}{2}x^2(T)]}\]\\
&\sim & \int_{x,z}u_0(x,z,0)\mu(x,z,0)dxdz\\
&=& \int_{x} u_0(x,0,0)m(x,0)dx\\ 
&\sim & \int_{x\in \mathbb{R}} e^{\alpha(\frac{1}{2}\Pi^2(0)x^2+\rho(0))} m_0(x)dx.
\end{eqnarray}
This completes the approximation in $\beta.$

\section{Conclusion}
In this paper we have developed a risk-sensitive mean-field-type optimal control framework. We have considered performance functionals and coefficients that are allowed to depend on some functional of the law as well as the state and control processes.  We derived optimality equations in infinite dimensions connecting dual functions  associated with Bellman functional to the adjoint processes of the  Pontryagin stochastic maximum principle. 
\section*{Acknowledgement}
The first author is supported by grants from the National Science Foundation (1303775 and 1612880), the Research Grants Council of the Hong  Kong Special Administrative Region (City U 500113 and 11303316).
 The research of the second author is supported by grants from the  Swedish Research Council. The research of the third author is supported by U.S. Air Force Office of Scientific Research.


\begin{thebibliography}{99}

\bibitem{one} Andersson D., Djehiche B., (2010): A maximum principle for SDE's of mean-field type, Appl. Math. Optim. 63(3), 341-356
\bibitem{two} Bensoussan A. , Frehse J. , Yam P., (2013): Mean Field Games and Mean Field Type Control Theory, Springer Briefs in Mathematics, N.Y. , Heidelberg, Dordrecht, London
\bibitem{three} Buckdahn R., Djehiche B., Li J. (2011) A general stochastic maximum principle for SDEs of mean-field type, Appl. Math. Optim. 64(2), 197-216
16
\bibitem{four} Djehiche B.,Tembine H., Tempone R., (2015) A stochastic maximum principle for risk-sensitive mean-field-type control, EEE Transactions on Automatic Control 60(10) : 2640-2649.
\bibitem{five} Hosking J. (2012) A stochastic maximum principle for a stochastic differential game of a mean-field type, Appl. Math. Optim. 66, 415-454
\bibitem{six} Lasry J.M., Lions P.L.  (2007) Mean Field Games, Japanese Journal of Mathematics, 2(1)
\bibitem{survey}B. Djehiche, A. Tcheukam, H. Tembine:
Mean-Field-Type Games in Engineering. CoRR abs/1605.03281 (2016)
\bibitem{seven} Li J., (2012) Stochastic maximum principle in the mean-field controls, Automatica, 48, 366-373


\bibitem{eight}  Djehiche B. and  Tembine H. : A Stochastic Maximum Principle for Risk-Sensitive Mean-Field-Type Control under Partial observation.  Book chapter in Springer Proceedings in Mathematics \& Statistics, Vol. 138, Giulia Di Nunno and Fred Espen Benth (Eds): Stochastics of environmental and financial economics, 978-3-319-23424-3, (2016)


\end{thebibliography}
\end{document}